\DeclareMathOperator{\length}{length}
\newcommand{\oa}{\ol \bga}
\theoremstyle{plain}
\newtheorem{theorem}{Theorem}
\newtheorem{lemma}[theorem]{Lemma}
\theoremstyle{definition}
\begin{document}
\title[Congruences of the fork extensions. I. CEP]
{Congruences of the fork extensions. I.\\
The Congruence Extension Property}  
\author{G. Gr\"{a}tzer} 
\address{Department of Mathematics\\
  University of Manitoba\\
  Winnipeg, MB R3T 2N2\\
  Canada}
\email[G. Gr\"atzer]{gratzer@me.com}
\urladdr[G. Gr\"atzer]{http://server.maths.umanitoba.ca/homepages/gratzer/}

\date{July 2, 2013}
\subjclass[2010]{Primary: 06C10. Secondary: 06B10.}
\keywords{semimodular lattice, fork extension, congruence.}

\begin{abstract}
For a slim, planar, semimodular lattice, 
G.~Cz\'edli and E.\,T.~Schmidt introduced the fork extension in 2012.
In this note we prove that 
the fork extension has the Congruence Extension Property.
\end{abstract}

\maketitle

\section{Introduction}\label{S:Introduction}

\begin{figure}[b]
\centerline{\includegraphics{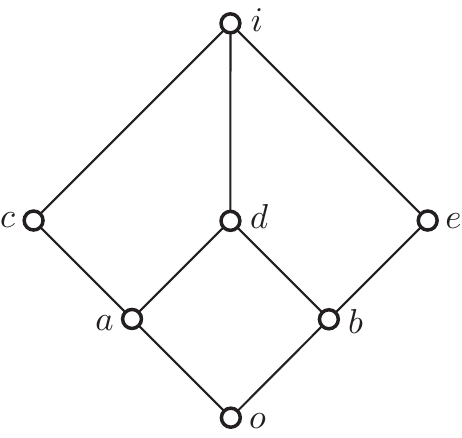}}
\caption{The lattice $\SfS 7$}\label{F:s7}
\end{figure}

Let $L$ and $K$ be lattices and let $K$ be an extension of $L$.
We say that the \emph{Congruence Extension Property} holds, 
if every congruence of $L$ has an extension to~$K$.

Let $L$ be a slim, planar, semimodular lattice, an \emph{SPS lattice}.
As in G.~Cz\'edli and E.\,T.~Schmidt~\cite{CSb}, 
\emph{inserting a fork} to $L$ at the covering square~$S$, 
firstly, replaces~$S$ by a~copy of $\SfS 7$ 
(see the lattice $\SfS 7$ in Figure~\ref{F:s7}). 

Secondly, if there is a chain 
$u\prec v\prec w$ such that~the element $v$ has just been added 
and 
$
   T = \set{x=u \mm z, z, u, w=z \jj u}
$
is a covering square in the lattice~$L$ 
(and so $u \prec v \prec w$ is not on the boundary of $L$) 
but $x \prec z$ at the present stage of the construction,
then we insert a new element~$y$ 
such that $x \prec y \prec z$ and $y \prec v$.

Let $L[S]$ denote the lattice we obtain when the procedure terminates. 
We say that $L[S]$ is obtained from
$L$ by \emph{inserting a fork to $L$} at the covering square~$S$.
See Figure~\ref{F:forks} for an illustration.

\begin{figure}[hbt]
\centerline{\includegraphics{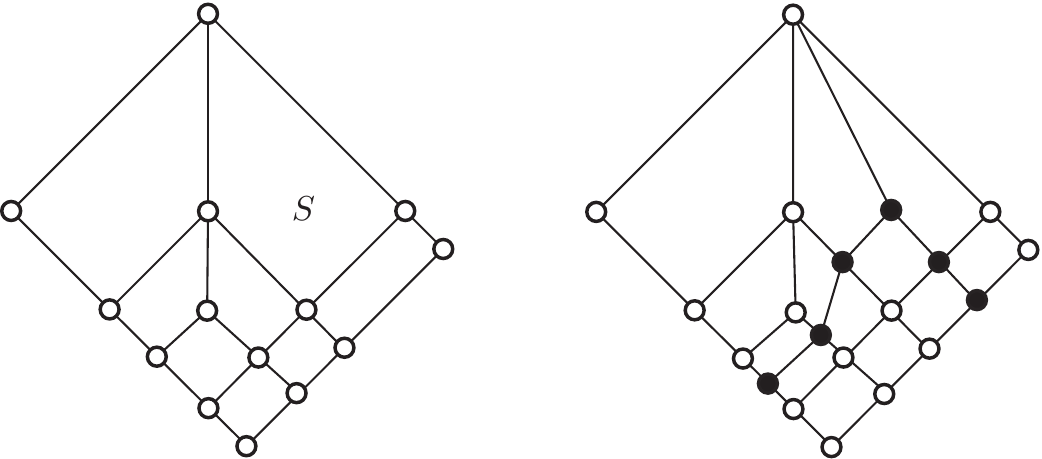}}
\caption{Inserting a fork at $S$}\label{F:forks}
\end{figure}

Let $L$ be an SPS lattice
and $S$ a covering square of $L$.
In~this note we will examine the connections between the congruence lattice of $L$ and the congruence lattice of $L[S]$.

\begin{theorem}\label{T:extension}
Let $L$ be a slim, semimodular, planar lattice. 
Let $S$ be a covering square of $L$.
Then the extension of $L$ to $L[S]$ 
has the Congruence Extension Property.
\end{theorem}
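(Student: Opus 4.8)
The plan is to prove the stronger statement that, for every $\alpha\in\operatorname{Con} L$, the congruence $\operatorname{con}_{L[S]}(\alpha)$ of $L[S]$ generated by $\alpha$ (viewed as a relation on $L\subseteq L[S]$) restricts to $\alpha$ on $L$; that congruence is then the required extension. Since $\alpha\subseteq\operatorname{con}_{L[S]}(\alpha)\cap L^{2}$ is automatic, it is enough to prove $\operatorname{con}_{L[S]}(\alpha)\cap L^{2}\subseteq\alpha$, and for this it suffices to exhibit \emph{any} congruence $\hat\alpha$ of $L[S]$ with $\alpha\subseteq\hat\alpha$ and $\hat\alpha\cap L^{2}\subseteq\alpha$, because then $\operatorname{con}_{L[S]}(\alpha)\subseteq\hat\alpha$. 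Here one uses that $L[S]$ is again a finite (indeed slim, planar, semimodular) lattice by \cite{CSb}, so a congruence of $L[S]$ is determined by the set of prime intervals it collapses, and a set of prime intervals arises this way exactly when it is closed under the forcing relation generated by the covering squares of $L[S]$ (opposite edges of a covering square force each other, in both directions) together with the extra semimodular forcings inside the inserted copies of $\SfS 7$.

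I would build $\hat\alpha$ by prescribing which prime intervals of $L[S]$ it collapses. The new prime intervals of $L[S]$ are of two kinds: when a new element $e$ is placed on a prime interval $[c,d]$ of $L$ during the construction (either one of the three new elements of the copy of $\SfS 7$ replacing $S$, or one of the elements inserted by the second step of the construction), $[c,d]$ is split into the two prime intervals $[c,e]$, $[e,d]$; and the construction also creates genuinely new ``fork'' prime intervals — the edges of a $\SfS 7$ that refine no edge of $S$, and the edges $[y,v]$ adjoined in the second step. I declare $\hat\alpha$ to collapse a surviving prime interval of $L$ iff $\alpha$ does; the two halves $[c,e],[e,d]$ of a split $L$-edge $[c,d]$ \emph{both or neither}, both iff $(c,d)\in\alpha$; and on each inserted configuration — the $\SfS 7$, and the six-element grid $\set{x,y,z,u,v,w}\cong C_{2}\times C_{3}$ produced around a second-step element $y$ — the fork prime intervals according to the unique congruence of that configuration whose restriction to the underlying covering square of $L$ (namely $\set{\Cll,\Cul,\Clr,\Cur}$, resp.\ $\set{x,z,u,w}$) equals $\alpha$ restricted to that square. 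That such a congruence exists is precisely the local fact about $\SfS 7$: its congruence lattice has exactly one more join-irreducible than that of a covering square, and the restriction map onto the square is surjective (and the corresponding statement for $C_{2}\times C_{3}$ handles the second-step configuration). Granting that this prescription does yield a congruence $\hat\alpha$, we are finished: for a prime interval $[c,d]$ of $L$ one has $(c,d)\in\hat\alpha$ iff every prime interval of $[c,d]$ in $L[S]$ is collapsed, which by the rules holds iff $(c,d)\in\alpha$; hence $\hat\alpha\cap L^{2}=\alpha$, while $\alpha\subseteq\hat\alpha$ is built in.

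The substance — and the step I expect to be the main obstacle — is checking that the prescribed set of collapsed prime intervals is forcing-closed in $L[S]$, i.e.\ that $\hat\alpha$ really is a congruence. Covering squares of $L[S]$ lying entirely in $L$ cause no difficulty, since $\alpha$ is a congruence of $L$. What remain are the two covering squares inside each inserted $\SfS 7$ together with its extra semimodular forcing (handled by the local computation in $\SfS 7$), the two covering squares created around each second-step element $y$ (handled by the computation in $C_{2}\times C_{3}$, using that $\alpha$ identifies opposite edges of the covering square $\set{x,z,u,w}$ of $L$), and the ``interfaces'': the covering squares of $L[S]$ inherited from $L$ but now having one edge split, and the places where two consecutive second-step configurations share an edge. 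The key geometric input, to be extracted from the description of $L[S]$ in \cite{CSb}, is that all the new prime intervals sit on the single trajectory of $L$ passing through $S$; concretely, the edge of $L$ split at the $i$-th second-step and the edge split at the $(i-1)$-st step are opposite edges of one covering square $T_{i}$ of $L$ (and the first of them is, via a further covering square of $L$, perspective to an edge of $S$). Because $\alpha$ collapses opposite edges of each $T_{i}$ together, the ``collapsed / not collapsed'' verdicts propagated out of one new region match those propagated in from the next region and from the surrounding part of $L$; a forcing chain that leaves $L$, runs through new prime intervals, and re-enters $L$ merely shadows a chain of prime perspectivities already present in $L$ along this trajectory, along which being $\alpha$-collapsed is invariant. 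Carrying out this trajectory-following verification once for $\SfS 7$, once for the generic second-step configuration, and once for the interfaces establishes that $\hat\alpha$ is a congruence, and hence the Congruence Extension Property.
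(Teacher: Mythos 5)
Your overall strategy (prescribe the collapsed prime intervals of an extension $\hat\alpha$ and verify closure) could in principle work, but as written it has two genuine gaps. First, the decisive step is explicitly deferred: you write ``Granting that this prescription does yield a congruence $\hat\alpha$, we are finished,'' and the subsequent ``trajectory-following verification'' is a plan, not an argument. That verification \emph{is} the theorem; the paper does nothing else. It splits into three cases according to $\bga\restr S$ --- $\cng o=i(\bga)$ (Lemma~\ref{L:extend1}), $\bga\restr S=\zero_S$ (Lemma~\ref{L:extend2}), and $\cng a_l=i(\bga)$ with $a_r\not\equiv i$ (Lemma~\ref{L:extend3}) --- writes down the extension explicitly as a partition of $L[S]$ into intervals, and checks that it is a congruence via the elementary cover conditions (C${}_{\jj}$), (C${}_{\mm}$) of Lemma~\ref{L:technical}. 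Your interface analysis around the elements $z_{l,i}, z_{r,j}, t$ would have to reproduce exactly these case-by-case checks, so nothing has been saved. Second, your reduction rests on an unproved structural claim: that a set of prime intervals of $L[S]$ is the collapsed set of a congruence if and only if it is closed under covering-square perspectivity together with the extra $\SfS 7$-type forcings. For a general finite (even SPS) lattice this is a substantive theorem in its own right (a swing-lemma/trajectory type description), not something quotable from \cite{CSb} or \cite{LTF}; proving it is at least as hard as constructing the extension directly, as the paper does.

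There is also a local error worth fixing even if the plan is retained: the congruence of $\SfS 7$ restricting to a given congruence of the square $S=\set{o,a_l,a_r,i}$ is \emph{not} unique. For instance, the congruence of $\SfS 7$ whose only nontrivial classes are $\set{z_{l,1},a_l}$, $\set{z_{r,1},a_r}$, $\set{t,i}$ restricts to $\zero_S$, so the zero congruence of $S$ has two preimages; the analogous non-uniqueness occurs in the $C_2\times C_3$ configuration of the second step (collapse $[x,y]$ and $[u,v]$ only). Consequently ``the unique congruence of that configuration whose restriction equals $\alpha$ restricted to that square'' does not define your prescription; you would have to specify, say, the \emph{minimal} preimage in each configuration and then re-do the consistency check at the interfaces with that choice --- which again lands you in the explicit verifications of Lemmas~\ref{L:extend1}--\ref{L:extend3}.
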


We will use the notations and concepts of lattice theory 
as in \cite{LTF}.

\section{Congruences of finite lattices}\label{S:Congruences}

The following lemma is trivial but very useful.

\begin{lemma}\label{L:technical}
Let $L$ be a finite lattice. 
Let $\bgd$ be an equivalence relation on $L$
with intervals as equivalence classes.
Then $\bgd$ is a congruence relation if{}f the following condition 
and its dual hold:
\begin{equation}\label{E:cover}
\text{If $x$ is covered by $y \neq z$ in $L$ 
and $x \equiv y\,(\tup{mod}\, \bgd)$,
then $z \equiv y \jj z\,(\tup{mod}\,\bgd)$.}\tag{C${}_{\jj}$}
\end{equation}
\end{lemma}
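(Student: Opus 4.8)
I would treat the two implications separately; the forward one is routine, the converse carries the weight. \emph{Necessity:} if $\bgd$ is a congruence and $x \prec y$, $x \prec z$ with $y \neq z$ and $x \equiv y\,(\tup{mod}\,\bgd)$, then joining both sides with $z$ and using $x \le z$ gives $z = x \jj z \equiv y \jj z\,(\tup{mod}\,\bgd)$, which is~\eqref{E:cover}; the dual condition holds dually. (The interval-class hypothesis is automatic here, the congruence classes of a finite lattice being convex sublattices, hence intervals.)

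\emph{Sufficiency:} assume~\eqref{E:cover} and its dual. It is enough to establish join-substitution, $a \equiv b \Rightarrow a \jj c \equiv b \jj c\,(\tup{mod}\,\bgd)$; meet-substitution follows by the dual argument from the dual of~\eqref{E:cover}, and the two together make $\bgd$ a congruence. Two reductions come first. Since each class is a convex sublattice, $a \equiv b$ gives $a \equiv a \mm b \equiv b$, so it suffices to treat comparable pairs; and for $a \le b$ with $a \equiv b$, every element of a maximal chain $a = c_0 \prec \dots \prec c_n = b$ lies in the (interval) class of $a$, so $c_i \equiv c_{i+1}$, and joining each step with $c$ and composing, we are reduced to proving
\[
  u \prec v,\ u \equiv v\,(\tup{mod}\,\bgd)\ \Longrightarrow\ u \jj c \equiv v \jj c\,(\tup{mod}\,\bgd)\quad\text{for all }c.
\]

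This last implication I would prove by induction on $\length[u, v \jj c]$. If the length is $1$ then $c \le v$, so $v \jj c = v$ and $u \jj c \in \{u,v\}$, and $u \equiv v$ finishes it. Otherwise, after disposing of the immediate cases $c \le v$, $u \jj c = v \jj c$, and $v \le u \jj c$, the covering $u \prec v$ forces $v \mm (u \jj c) = u$, so any cover $p$ of $u$ with $u \prec p \le u \jj c$ is distinct from $v$; applying~\eqref{E:cover} to the two covers $v, p$ of $u$ yields $p \equiv v \jj p\,(\tup{mod}\,\bgd)$. Here $p \jj c = u \jj c$ and $(v \jj p) \jj c = v \jj c$ (both because $u \le p \le u \jj c$), while $u < p$ makes $\length[p, v \jj c] < \length[u, v \jj c]$. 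Decomposing $[p, v \jj p]$ — which lies inside a single class — as a maximal chain $p = q_0 \prec \dots \prec q_r = v \jj p$ and applying the induction hypothesis to each covering step $q_i \prec q_{i+1}$ (legitimate, since $\length[q_i, q_{i+1} \jj c] \le \length[p, v \jj c] < \length[u, v \jj c]$) gives $q_i \jj c \equiv q_{i+1} \jj c$, whence $u \jj c = q_0 \jj c \equiv q_r \jj c = v \jj c$ by transitivity. The delicate point is precisely the choice of induction parameter: the reduction of the pair $u \prec v$ to the covering pairs inside $[p, v \jj p]$ terminates only because $p$ lies \emph{strictly} above $u$, which is what forces $\length[u, v \jj c]$ to drop; everything else is bookkeeping with joins (and, for meet-substitution, its order dual).
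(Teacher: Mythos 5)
Your proof is correct and is essentially the paper's argument written out in full: the paper disposes of the lemma with a ``trivial induction first on $\length[x,y]$ and then on $\length[x, x \jj z]$,'' and your reduction to covering pairs via maximal chains inside a class is exactly the first induction, while your induction on $\length[u, v \jj c]$ (with the cover $p$ of $u$ below $u \jj c$ and condition (C${}_{\jj}$) applied to the covers $v,p$) is the second, with only an inessential change of induction parameter. No gaps; the necessity direction and the dual treatment of meets are handled as the paper intends.
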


\begin{proof}
We want to prove that if $x \leq y$ and $\cng x=y (\bgd)$,
then $\cng x \jj z = y \jj z(\bgd)$.
The proof is a trivial induction first on $\length[x, y]$
and then on $\length[x, x \jj z]$.
\end{proof}

Let (C${}_{\mm}$) denote the dual of (C${}_{\jj}$).

\section{The fork construction}\label{S:forks}

Let $L$ be an SSP lattice. 
Let $S$ be a covering square of $L$.

We need some notation for the $L[S]$ construction, 
see Figure~\ref{F:forksdetails}. 

\begin{figure}[h]
\centerline{\includegraphics{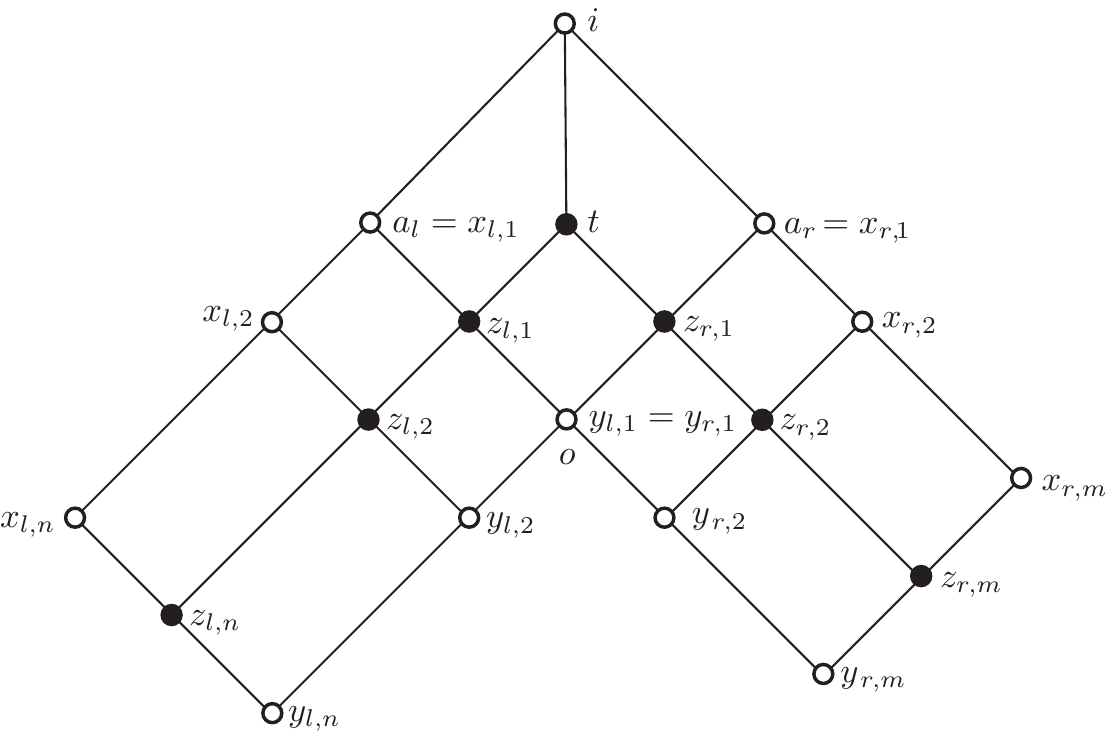}}
\caption{Notation for the fork construction}\label{F:forksdetails}
\end{figure}

We start the construction 
by adding the elements $t$, $z_{l,1}$, and $z_{r,1}$
so that the set $\set{o, z_{l,1}, z_{r,1}, a_l, a_r, t,i}$ 
forms a sublattice $\SfS 7$.

Let $a_l = x_{l,1}$, $o = y_{l,1}$. 
If $k$ is the largest number so that 
$x_{l,k}$, $y_{l,k}$, and $z_{l,k}$ have already been defined, and
\[
   T = \set{y_{l,k+1} = x_{l,k+1} \mm y_{l,k}, x_{l,k+1}, y_{l,k},
    x_{l,k} = x_{l,k+1} \jj y_{l,k}}
\]
is a covering square in $L$, then we add the element $z_{l,k+1}$,
so we get two new covering squares 
$\set{y_{l,k+1}, x_{l,k+1}, y_{l,k}, x_{l,k}}$
and $\set{z_{l,k+1}, y_{l,k+1}, z_{l,k}, y_{l,k}}$.
We proceed similarly on the right.

So $L[S]$ is constructed by inserting the elements in the set
\[
   F[S] = \set{t,z_{l,1} \succ \dots \succ z_{l,n}, 
          z_{r,1} \succ \dots \succ z_{r,m}}
\]
so that $\set{o, z_{l,1}, z_{r,1}, a_l, a_r, t,i}$ 
is a sublattice $\SfS 7$,
moreover, $x_{l,i} \succ z_{l,i} \succ y_{l,i}$ for $i = 1, \dots, n$, and $x_{r,i} \succ z_{r,i} \succ y_{r,i}$ for $i = 1, \dots, m$.
The new elements are black filled in Figure~\ref{F:forksdetails}.  

\begin{lemma}\label{L:easy}
Let $L$ be an SSP lattice with the covering square~$S$.
Then $L$ is a sublattice of $L[S]$. 
Therefore, every element $x$ of $L[S]$ 
has an upper cover $\ol x$ and a lower cover $\ul x$ in $L$.
Specifically, $\ol z_{l,i} = x_{l,i}$ for $i = 1, \dots, n$,
$\ol z_{r,i} = x_{r,i}$ for $i = 1, \dots, m$, and $\ol t = i$.

If $x, y \in L[S]$ and $x \nin F[S]$, then 
\[
   x \jj y = x \jj \ol y,
\]
and dually.
\end{lemma}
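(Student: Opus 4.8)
The plan is to prove the two assertions of Lemma~\ref{L:easy} in the order stated, relying on the explicit description of $F[S]$ and the chains $x_{l,i} \succ z_{l,i} \succ y_{l,i}$, $x_{r,i} \succ z_{r,i} \succ y_{r,i}$, and $i \succ t \succ o$. First I would argue that $L$ is a sublattice of $L[S]$. Since $L[S]$ is obtained from $L$ by inserting, one at a time, a new element that is placed strictly between an existing covering pair of $L[S]$ inside a covering square, each insertion step preserves the existing joins and meets among old elements (the new element is join-irreducible and meet-irreducible in the enlarged lattice, sitting inside an interval $[p,q]$ with $p \prec q$ before the step, so it cannot arise as a join or meet of two elements both distinct from it). Iterating, $L$ is closed under the operations of $L[S]$, hence a sublattice. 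Planarity and semimodularity of $L[S]$ (established in Cz\'edli--Schmidt~\cite{CSb}) then guarantee that every element of $L[S]$ has a cover and a lower cover; the specific identifications $\ol z_{l,i}=x_{l,i}$, $\ol z_{r,i}=x_{r,i}$, $\ol t=i$ are read directly off the construction, since in $L[S]$ we have $x_{l,i}\succ z_{l,i}$ and $x_{l,i}\in L$, and similarly on the right and for $t$.

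Next I would prove the join identity: if $x,y\in L[S]$ and $x\notin F[S]$, then $x\jj y=x\jj\ol y$. If $y\in L$ then $\ol y$ could be taken to be $y$ itself (a cover in $L$ need not equal $y$), so I should instead phrase it as: for $y\in F[S]$, $x\jj y=x\jj\ol y$, and for $y\notin F[S]$ the statement is trivial because $y\le\ol y$ forces $x\jj y\le x\jj\ol y$, while we must separately check $x\jj\ol y\le x\jj y$, which holds when $\ol y=y$; the substantive case is $y\in F[S]$. So assume $y\in F[S]$, say $y=z_{l,j}$ (the cases $y=z_{r,j}$ and $y=t$ are symmetric/analogous). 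We have $\ol y=x_{l,j}\in L$, and clearly $x\jj z_{l,j}\le x\jj x_{l,j}$. For the reverse, the key point is that $x\jj z_{l,j}$ is an element of $L$ (since $x\notin F[S]$ and joins of a non-fork element with anything land in $L$ — this needs the sublattice structure plus a check that no new fork element is a join of the form $x\jj z_{l,j}$ with $x\notin F[S]$, which follows because each new element is covered in $L[S]$ by an old element and is the unique new element below that old cover). Once $x\jj z_{l,j}\in L$ and $x\jj z_{l,j}\ge z_{l,j}$, semimodularity/the covering relation $z_{l,j}\prec x_{l,j}$ in $L[S]$ forces $x\jj z_{l,j}\ge x_{l,j}$, hence $x\jj z_{l,j}\ge x\jj x_{l,j}=x\jj\ol y$, giving equality. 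The dual statement for meets is handled by the order-dual argument using $\ul{}$ and the fact that each fork element also has an old lower cover.

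The main obstacle I anticipate is the claim that \emph{$x\jj y\in L$ whenever $x\notin F[S]$}, i.e.\ that the fork elements can never be produced as such joins. This requires a careful local analysis of where the inserted elements sit. The clean way to see it: every $y\in F[S]$ satisfies $y\prec \ol y$ with $\ol y\in L$, and $y$ is the only element of $F[S]$ lying below $\ol y$ (by the geometry of the construction in Figure~\ref{F:forksdetails}, where each $z_{l,i}$ hangs under its own $x_{l,i}$ and $t$ under $i$). If $x\jj y'=w\in F[S]$ for some $w$, then $x\le w\prec \ol w$, so $x\le\ol w\in L$; and $y'\le w$. Now if $y'\notin F[S]$ too, then $x,y'\le\ol w$ forces $x\jj y'\le\ol w$, but $x\jj y'=w\prec\ol w$ is possible only if no element of $L$ lies strictly between; yet $w\in[x\jj\text{-image}]$... — the cleanest finish is to invoke that $L[S]$ is a slim semimodular lattice in which each element of $F[S]$ is doubly irreducible with its unique cover and lower cover in $L$, so a join $x\jj y$ equals such a $w$ only if one of $x,y$ lies in $F[S]$ below $w$, forcing that argument to be $y$. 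I would write this out as a short structural lemma about $F[S]$ before deducing the join formula, and the meet formula follows by duality.
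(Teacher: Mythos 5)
Your second part hinges on the claim that $x \jj y \in L$ whenever $x \nin F[S]$, and that claim is false --- and with it the unrestricted identity you set out to prove. Already $o \nin F[S]$ and $o \prec z_{l,1}$ give $o \jj z_{l,1} = z_{l,1} \in F[S]$, while $o \jj \ol{z_{l,1}} = o \jj a_l = a_l$; and when the fork propagates ($n \ge 2$) there are incomparable examples, e.g.\ $o \jj z_{l,2} = z_{l,1} \ne a_l = o \jj \ol{z_{l,2}}$. So elements of $F[S]$ do arise as joins $x \jj y$ with $x \nin F[S]$, the ``structural lemma'' you propose to rule this out cannot hold (its suggested justification also fails: $z_{l,1}$ and $z_{l,2}$ both lie below $a_l$, so a new element is not the unique new element under its old upper cover), and in exactly these cases the asserted equation $x \jj y = x \jj \ol y$ itself breaks down. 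The join rule is correct only with an extra hypothesis such as $x \jj y \nin F[S]$ --- equivalently, in the form $\ol{x \jj y} = x \jj \ol y$ with the convention $\ol w = w$ for $w \in L$ --- and in that restricted situation your argument for the reverse inequality is the right one: an element of $L$ strictly above $z_{l,j}$ lies above $x_{l,j}$. Since the paper states Lemma~\ref{L:easy} without proof, the issue is not a divergence from the paper's method but that your pivotal claim (and the identity in the generality you aim at) is untrue; you must either add the hypothesis or prove the corrected formulation.

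Your first part (that $L$ is a sublattice) is right in outline, but the justification misdescribes the construction: $t$ is not obtained by inserting a single doubly irreducible element into a covering pair of the ambient lattice --- both of its lower covers $z_{l,1}, z_{r,1}$ are new, and $t = z_{l,1} \jj z_{r,1}$ is join-reducible in $L[S]$. What both halves of the lemma really rest on is the determination of the old elements comparable with a new one: $L \cap \id{z_{l,j}} = L \cap \id{y_{l,j}}$ and $L \cap \fil{z_{l,j}} = L \cap \fil{x_{l,j}}$ (similarly on the right), together with $L \cap \id t = L \cap \id o$ and $L \cap \fil t = L \cap \fil i$. These identities show at once that no element of $F[S]$ can be the least upper bound or greatest lower bound in $L[S]$ of two elements of $L$ (hence $L$ is a sublattice and the covers $\ol x$, $\ul x$ of the new elements are as stated), and they yield the corrected join and meet formulas immediately. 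I suggest you isolate these four identities, prove them by induction along the construction, and derive the lemma from them.
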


The following results are well known.

\begin{lemma}\label{L:known}
Let $L$ be an SSP lattice. 
\begin{enumeratei}
\item An element of $L$ has at most two covers.

\item Let $a \in L$. 
Let $a$ cover the three elements $x_1$, $x_2$, and $x_3$.
Then the set $\set{x_1,x_2,x_3}$ generates an $\SfS 7$ sublattice.

\item If the elements $x_1$, $x_2$, and $x_3$ are adjacent, 
then the $\SfS 7$ sublattice of \tup{(ii)} is a cover-preserving sublattice.

\end{enumeratei}
\end{lemma}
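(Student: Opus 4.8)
All three assertions are part of the standard structure theory of slim semimodular lattices, so in the final version I would cite the papers of Gr\"{a}tzer--Knapp and of Cz\'{e}dli--Schmidt (cf.\ also \cite{CSb}); here I only indicate the arguments. I use two facts: a finite semimodular lattice is slim if{}f it contains no $M_3$ sublattice; and, in a planar diagram, an interval $[u,v]$ is exactly the set of elements lying in the region bounded by its leftmost and rightmost maximal chains, so that if $a$ covers (or is covered by) three elements $p_1,p_2,p_3$ drawn in left-to-right order, then $p_2\in[p_1\mm p_3,a]$ (resp.\ $p_2\in[a,p_1\jj p_3]$). For \tup{(i)}: if $a\prec b_1,b_2,b_3$ drawn left to right, semimodularity gives $b_1\mm b_3=a$ and $b_1,b_3\prec b_1\jj b_3$, hence $b_2\le b_1\jj b_3$ and $b_2$ covers $a$ inside $[a,b_1\jj b_3]$; but this interval is a graded length-two lattice in which every atom is a coatom, i.e.\ a copy of $M_k$ with $k\ge 3$, so any three of its atoms span a (cover-preserving) $M_3$ sublattice of $L$, contradicting slimness.

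For \tup{(ii)}, let $a$ cover $x_1,x_2,x_3$, drawn left to right. From $x_i<x_i\jj x_j\le a$ and $a\succ x_i$ we get $x_i\jj x_j=a$ for $i\ne j$, so $\set{x_1,x_2,x_3}$ yields the single join $a$. By the region fact $x_2\ge x_1\mm x_3$; putting $P:=x_1\mm x_3=x_1\mm x_2\mm x_3$, $z_l:=x_1\mm x_2$, and $z_r:=x_2\mm x_3$, we get $z_l\mm z_r=P$ and $z_l\jj z_r\le x_2$, so the sublattice generated by $\set{x_1,x_2,x_3}$ is contained in $\set{P,z_l,z_r,z_l\jj z_r,x_1,x_2,x_3,a}$. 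It remains to rule out the degenerate shapes: if $z_l=z_r=P$, then $\set{P,x_1,x_2,x_3,a}$ is an $M_3$, contradicting slimness; the cases where exactly one of $z_l,z_r$ equals $P$, and the case $z_l\jj z_r<x_2$, are excluded by a closer look at the planar diagram, each of them forcing an element into a region that the diagram shows to be empty. What is left is $\set{P,z_l,z_r,x_1,x_2,x_3,a}$ with $z_l\jj z_r=x_2$ and $z_l,z_r$ distinct and strictly above $P$; checking the relations already in hand shows that this seven-element lattice is $\SfS 7$.

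For \tup{(iii)}, with the notation of \tup{(ii)}, cover-preservation of the generated $\SfS 7$ means $P\prec z_l$, $P\prec z_r$, $z_l\prec x_1$, $z_l\prec x_2$, $z_r\prec x_2$, $z_r\prec x_3$ in $L$ (the relations $x_i\prec a$ being given). When $x_1,x_2,x_3$ are adjacent, an element strictly between $z_l$ and $x_1$ in $L$ would, by planarity, lie in the part of the diagram bounded by maximal chains from $z_l$ up through $x_1$ and up through $x_2$, and adjacency of $x_1,x_2,x_3$ says exactly that this part contains nothing beyond the elements already listed; the remaining five coverings are symmetric. The step I expect to cost the most work is the second half of \tup{(ii)} together with all of \tup{(iii)}: the exclusion of the degenerate generated sublattices and the cover-preservation claim are where one must use the planar diagram in an essential way---the absence of an $M_3$ sublattice and the gradedness of $L$ are by themselves not enough---so that is where I would concentrate my care, following the cited treatments.
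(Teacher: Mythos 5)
The paper offers no proof of this lemma at all---it is labelled ``well known,'' resting implicitly on the cited Cz\'edli--Schmidt and Gr\"atzer--Knapp literature---so your plan of citing those sources, supplemented by sketches, is essentially the paper's own treatment: your argument for (i) is complete and correct, and the steps of (ii)--(iii) that you leave to ``a closer look at the planar diagram'' are precisely what the cited papers supply, so relative to the paper this is not a gap. One correction to a background claim: ``a finite semimodular lattice is slim if{}f it contains no $M_3$ sublattice'' is false as an if{}f (the Boolean cube $2^3$ is semimodular and $M_3$-free but not slim---indeed not even planar); only the direction you actually use, slim implies no $M_3$, is valid, and it holds in every finite lattice, since join-irreducibles chosen below the three atoms of an $M_3$ and not below its bottom form a three-element antichain.
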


\section{Proving Theorem~\ref{T:extension}}\label{S:fork}

We prove Theorem~\ref{T:extension} (and somewhat more) 
in the next three lemmas.

\begin{lemma}\label{L:extend1}
Let $L$ be an SSP lattice. 
Let $S = \set{0,a_l,a_r,i}$ be a covering square of $L$.  
Let $\cng o=i(\bga)$.
Then $\bga$ extends to $L[S]$ uniquely to a congruence~$\oa$ of~$L[S]$.
\end{lemma}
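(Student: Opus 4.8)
The plan is to define $\oa$ explicitly on $L[S]$ by prescribing which of the new elements are collapsed, and then to verify it is a congruence using Lemma~\ref{L:technical}; uniqueness will follow because every new element is forced into a block by the elements of $L$ around it.

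First I would set up the extension. The congruence $\bga$ on $L$ collapses $o$ and $i$, hence the whole square $S = \{o, a_l, a_r, i\}$ is in one $\bga$-block, and more generally each covering square $\{y_{l,k+1}, x_{l,k+1}, y_{l,k}, x_{l,k}\}$ that was used in the construction sits inside an interval that is a union of $\bga$-blocks. On $L[S]$, I define $\oa$ to be the smallest equivalence relation containing $\bga$ (restricted to $L$, which is a sublattice by Lemma~\ref{L:easy}) together with the collapses $o \equiv t$, $z_{l,i} \equiv x_{l,i} \equiv y_{l,i}$ for $i=1,\dots,n$, and $z_{r,i} \equiv x_{r,i} \equiv y_{r,i}$ for $i=1,\dots,m$; equivalently, each new element $w \in F[S]$ is put into the $\bga$-block of $\ol w \in L$ (using the notation $\ol w$ from Lemma~\ref{L:easy}). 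Because each $\bga$-block is an interval of $L$, and the new elements each lie between their lower and upper covers in $L$, the resulting blocks of $\oa$ are again intervals, now of $L[S]$.

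Next I would check that $\oa$ is a congruence by verifying condition (C${}_{\jj}$) of Lemma~\ref{L:technical} and its dual (C${}_{\mm}$). The verification splits into cases according to how many of the three elements $x$, $y$, $z$ lie in $F[S]$. When all three are in $L$, the required implication is just (C${}_{\jj}$) for $\bga$ on $L$, except that one must check the join taken in $L[S]$ agrees with the join in $L$ — this is exactly Lemma~\ref{L:easy} ($x \jj y = x \jj \ol y$ when $x \nin F[S]$), together with the fact that $\oa$ puts $y$ and $\ol y$ in the same block. When one or more of the elements is a new $z_{l,k}$, $z_{r,k}$, or $t$, I would use the explicit cover structure recorded after Figure~\ref{F:forksdetails} (the new covering squares $\{z_{l,k+1}, y_{l,k+1}, z_{l,k}, y_{l,k}\}$ and the copy of $\SfS 7$ on $\{o, z_{l,1}, z_{r,1}, a_l, a_r, t, i\}$), checking in each local configuration that collapsing the prescribed edges propagates correctly; Lemma~\ref{L:known} guarantees there are no other covers to worry about, so the case analysis is finite and local.

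The main obstacle I expect is the bookkeeping in the covering-square case: one must confirm that for a new covering square $\{z_{l,k+1}, y_{l,k+1}, z_{l,k}, y_{l,k}\}$, collapsing $z_{l,k} \equiv y_{l,k}$ (which holds in $\oa$ since both go into the block of $x_{l,k}$, and $x_{l,k} \equiv y_{l,k}$ because $o \equiv i$ forces the original square $T$ to collapse) indeed forces $z_{l,k+1} \equiv y_{l,k+1}$, consistently with our definition — and symmetrically that nothing more is collapsed than what we put in, which is where uniqueness comes from. Once the local pictures are drawn this is routine, but it is the step that requires care. Uniqueness itself is then immediate: any congruence of $L[S]$ extending $\bga$ must collapse each edge forced by the $\SfS 7$ and the new squares, so it contains $\oa$; and since $\oa$ already restricts to $\bga$ on $L$ and collapses nothing extra in $L$ (the blocks meet $L$ in exactly the $\bga$-blocks), it must equal $\oa$.
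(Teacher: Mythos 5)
Your construction yields the same congruence as the paper's, but you verify it by a genuinely different route. The paper defines the classes of $\oa$ directly as the intervals $[u,v]_{L[S]}$, where $[u,v]_L$ runs over the congruence classes of $\bga$; each $\bga$-class is simply stretched to an interval of $L[S]$, which absorbs $t$, the $z_{l,k}$, and the $z_{r,k}$ exactly as in your description (your block of $\ol w$ is the stretched class containing $\ol w$). It then checks the substitution property directly: if $a,b$ lie in the stretched class $[u,v]_{L[S]}$ and $c$ lies in $[u',v']_{L[S]}$, then $u \jj u' \equiv v \jj v' \pmod{\bga}$, so $a \jj c$ and $b \jj c$ are both sandwiched inside the stretched class containing $u \jj u'$ and $v \jj v'$ --- no case analysis on covers at all. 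You instead invoke Lemma~\ref{L:technical} and check (C${}_{\jj}$) and (C${}_{\mm}$) cover by cover, which is the method the paper reserves for the harder Lemmas~\ref{L:extend2} and~\ref{L:extend3}; that works here too, but at the price of the local bookkeeping you flag as the main obstacle, and under the hypothesis $o \equiv i$ all those local cases trivialize, so the paper's global argument is shorter. Both arguments rest on the same small induction, which you should state explicitly rather than leave inside the ``obstacle'' paragraph: $o \equiv i \pmod{\bga}$ gives $x_{l,1} = a_l \equiv o = y_{l,1}$, and since $y_{l,k+1} = x_{l,k+1} \mm y_{l,k}$ and $x_{l,k} = x_{l,k+1} \jj y_{l,k}$, meeting with $x_{l,k+1}$ propagates $x_{l,k} \equiv y_{l,k} \pmod{\bga}$ down the whole trajectory (symmetrically on the right). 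This is what makes each of your blocks an interval of $L[S]$ and what forces $t$, $z_{l,k}$, $z_{r,k}$ into the prescribed blocks, hence it is also the heart of your uniqueness argument --- which, incidentally, you carry out in more detail than the paper, where uniqueness is simply declared obvious.
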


\begin{proof}
We define $\oa$ as the partition:
\[
  \gp =  \setm{[u,v]_{L[S]}}{[u,v]_L \text{\ is a congruence class of $\bga$}}
\]
To verify that $\gp$ is indeed a partition of $L[S]$, let
\[
   A =  \UUm{[u,v]_{L[S]}}{[u,v]_L 
        \text{\ is a congruence class of $\bga$}}.
\]
Clearly, $L \ci A$. Since $\cng o=i(\bga)$, there is a 
congruence class $[u,v]_{L}$ containing $o$ and $i$. 
Hence  $[u,v]_{L[S]} \in \gp$. Therefore, $t, z_{l,1}, z_{r,1} \in A$.
For $i = 1, \dots, n$,  there is a 
congruence class $[u_i,v_i]_{L}$ containing $x_{l,i}$ and $ y_{l,i}$.
Hence $z_{l,i} \in [u_i,v_i]_{L[S]} \ci A$ and symmetrically  $z_{l,i} \in A$
for $i = 1, \dots, m$. This proves that $A = L$.

Next we show that the sets in $\gp$ are pairwise disjoint.

Finally, we verify the substitution properties. 
Let $a,b,c \in L[S]$ and $\cng a=b(\gp)$.
Then there exist $u,v \in L$ with  $\cng u=v(\bga)$ such that  $a,b \in [u,v]_{L[S]}$. 
There is also an interval $[u',v']_{L[S]} \in \gp$ with $c \in [u',v']_{L[S]}$.
Since $\cng u=v(\bga)$ and  $\cng u'=v'(\bga)$, it follows that 
$\cng u\jj u'=v \jj v'(\bga)$, so there is a congruence class $[u'',v'']$ of $\bga$ 
containing $u\jj u'$ and $v \jj v'$. 
So $u\jj u', v \jj v' \in [u'',v'']_{L[S]}$, 
verifying the substitution property for joins. 
The dual proof verifies the substitution property for meets.

The uniqueness statement is obvious.
\end{proof}

\begin{lemma}\label{L:extend2}
Let $L$ be an SSP lattice. 
Let $S = \set{0,a_l,a_r,i}$ be a covering square of~$L$. 
Let $\bga$ be a congruence of $L$ satisfying  $\bga \restr S = \zero_S$.
Then $\bga$ extends to  $L[S]$.
\end{lemma}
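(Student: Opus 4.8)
The plan is to exhibit the extension explicitly. Since $\bga\restr S=\zero_S$, the congruence $\bga$ separates $o,a_l,a_r,i$; in particular it collapses none of $[o,a_l]$, $[o,a_r]$, $[a_l,i]$, $[a_r,i]$. The first step is to record two consequences of this. First, $\bga$ collapses none of the leg intervals $[y_{l,k},x_{l,k}]$ and $[y_{r,k},x_{r,k}]$: as opposite edges of the covering square $\set{y_{l,k},x_{l,k},y_{l,k-1},x_{l,k-1}}$ of $L$, the intervals $[y_{l,k},x_{l,k}]$ and $[y_{l,k-1},x_{l,k-1}]$ are perspective, so $[y_{l,k},x_{l,k}]$ is projective to $[o,a_l]=[y_{l,1},x_{l,1}]$, which $\bga$ does not collapse; dually on the right. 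Second, the same square shows, for $k\ge 2$, that $[y_{l,k},y_{l,k-1}]\sim[x_{l,k},x_{l,k-1}]$ in $L$, so $\bga$ collapses $[y_{l,k},y_{l,k-1}]$ if and only if it collapses $[x_{l,k},x_{l,k-1}]$.

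Next I would define $\oa$ by prescribing its blocks. On $L$ it is to be $\bga$; the element $t$ is a singleton block; the chain $z_{l,1}\succ\dots\succ z_{l,n}$ is cut so that $z_{l,k}$ and $z_{l,k-1}$ lie in one block exactly when $\cng y_{l,k}=y_{l,k-1}(\bga)$ (equivalently, by the remark above, when $\cng x_{l,k}=x_{l,k-1}(\bga)$); symmetrically on the right. Because $L[S]$ is the disjoint union of $L$, $\set t$, the left $z$-chain, and the right $z$-chain, this is a well-defined equivalence relation extending $\bga$. I would then check that its blocks are intervals of $L[S]$: the $z$-runs and $\set t$ are convex subchains of $L[S]$, and no fork element can sit inside a $\bga$-block $[u,v]_L$ — if $z_{l,k}\in[u,v]_L$ then $u\le\ul z_{l,k}=y_{l,k}$ and $v\ge\ol z_{l,k}=x_{l,k}$, so $[u,v]_L$ would collapse $[y_{l,k},x_{l,k}]$, contradicting the first step; and $t\in[u,v]_L$ would force $u\le o$, $v\ge i$, hence $\cng o=i(\bga)$, contradicting $\bga\restr S=\zero_S$. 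Thus each $\bga$-block survives unchanged in $L[S]$, and every block of $\oa$ is an interval.

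It remains to verify that $\oa$ is a congruence, which I would do through Lemma~\ref{L:technical}, checking (C${}_{\jj}$) and its dual. Suppose $x\prec y$ in $L[S]$, $z\ne y$ is another cover of $x$, and $\cng x=y(\oa)$; then $x,y$ lie in a common nonsingleton block, hence either in $L$ with $\cng x=y(\bga)$, or in a common $z$-run. When $x,y,z$ all lie in $L$, the claim reduces to (C${}_{\jj}$) for $\bga$ in $L$, since $L$ being a sublattice of $L[S]$ makes the relevant coverings and joins agree. The remaining cases involve a fork element, and there, using Lemma~\ref{L:easy} and the covering squares $\set{y_{l,k},z_{l,k},y_{l,k-1},z_{l,k-1}}$ and $\set{z_{l,k},x_{l,k},z_{l,k-1},x_{l,k-1}}$ of $L[S]$ (together with their right-hand and $\SfS 7$ analogues) to evaluate the joins, the requirement becomes one of the form ``$\cng z_{l,k}=z_{l,k-1}(\oa)$ iff $\cng y_{l,k}=y_{l,k-1}(\bga)$ (resp.\ iff $\cng x_{l,k}=x_{l,k-1}(\bga)$)'', which holds precisely by the definition of $\oa$ and the perspectivities of the first step. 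The dual argument disposes of (C${}_{\mm}$).

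The delicate point, and the one I expect to be the main obstacle, is pinning down $\oa$ on $F[S]$ and seeing that nothing more is forced. One cannot simply make every new element a singleton: a collapse $\cng y_{l,k}=y_{l,k-1}(\bga)$ ``off to the side'' propagates through the doubled edges of the fork and forces $\cng z_{l,k}=z_{l,k-1}(\oa)$. Conversely, the hypothesis $\bga\restr S=\zero_S$ is exactly what keeps the prime intervals of the $\SfS 7$ core — hence $[o,z_{l,1}]$, $[z_{l,1},a_l]$, $[z_{l,1},t]$, and their right-hand counterparts — uncollapsed, so that no collapse is ever forced at $i$, at $a_l$, or at $a_r$; this is what makes the prescription above actually define a congruence.
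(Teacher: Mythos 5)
Your construction is essentially the paper's: you define the extension by the same partition (blocks of $\bga$ unchanged, $t$ a singleton, and the $z$-chains cut according to the $\bga$-collapses of the adjacent covers, your consecutive $y_{l,k}\equiv y_{l,k-1}$ criterion being equivalent to the paper's grouping by $x_{l,i}$-classes), and you verify it through Lemma~\ref{L:technical} just as the paper does. The only difference is that you spell out, via the perspectivity of $[y_{l,k},x_{l,k}]$ onto $[o,a_l]$, why no fork element falls inside a $\bga$-block, a point the paper dismisses with ``Clearly, $\oa$ is a partition''; this is a welcome addition, not a different route.
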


\begin{proof}
We are going to define the minimal congruence $\oa$ 
of $L[S]$ extending $\bga$.

For $i = 1, \dots, n$, define $\ul i$ 
as the smallest element in $\set{1, \dots, n}$
satisfying $\cng x_{l,i} = x_{l,\ul i}(\bga)$; 
let $\ol i$ be the largest one.
 
We define $\oa$ as the partition:
\begin{align}\label{E:zero}
  \gp_d =  &\setm{[u,v]_{L[S]}}{[u,v]_L 
  \text{\ is a congruence class of $\bga$}}\\
 & \uu \set{t}
  \uu \setm{[z_{l,\ul i}, z_{l,\ol i}]}{i=1, \dots, n}
   \uu \setm{[z_{r,\ul i}, z_{r,\ol i}]}{i=1, \dots, m}.\notag
\end{align}
Clearly, $\oa$ is a partition. 
To see that $\oa$ is a congruence, we use Lemma~\ref{L:technical}.
To verify (C${}_\jj$), let $x \prec y,z \in L[S]$, $y \neq z$, 
and $\cng x = y (\oa)$. We want to prove that $\cng z = z \jj y (\oa)$.
By \eqref{E:zero}, either $\cng x=y(\bga)$ 
or $x,y \in [z_{l,\ul i}, z_{l,\ol i}]$ for some $i=1, \dots, n$
(or symmetrically).

First, let $\cng x=y(\bga)$. If $z \in L$, then $\cng z = z \jj y (\bga)$
since $\bga$ is a congruence, so by \eqref{E:zero}, 
$\cng z = z \jj y (\oa)$. If $z \nin L$, that is, if $z \in F[S]$,
then $t \neq i$, since $i$ does not cover an element not in $F[S]$.
So $z = z_{l, i}$ for some $i=1, \dots, n$ (or symmetrically).
Since $x \prec z = z_{l, i}$ and $x \in L$, 
we get that $x = z_{l, i}$, $i > 1$ and $x = y_{l, i-1}$. 
It easily follows that $\cng z=y\jj z(\bga)$.

Second, let $x,y \in [z_{l,\ul i}, z_{l,\ol i}]$ for some $i=1, \dots, n$
(or symmetrically). Then $x = z_{l, j}$ and $y = z_{l, j-1}$
with $\cng z_{l, j}=z_{l, j-1}(\bga)$ by \eqref{E:zero}, 
and so $\cng z = z \jj y (\oa)$.
\end{proof}

\begin{lemma}\label{L:extend3}
Let $L$ be an SSP lattice. 
Let $S = \set{0,a_l,a_r,i}$ be a covering square of $L$. 
Let $\bga$ be a congruence of $L$ satisfying  
$\cng a_l=i(\bga)$ and $a_r \not\equiv i\,(\tup{mod}\, \bga)$.
Then $\bga$ extends to a unique congruence $\oa$ of $L[S]$.
\end{lemma}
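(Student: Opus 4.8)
\textbf{Proof proposal for Lemma~\ref{L:extend3}.}
The plan is to mimic the construction of Lemma~\ref{L:extend2}, but now one "side" of the fork is collapsed by $\bga$ and the other is not, so the new partition must treat the left and right ladders asymmetrically, and it must also collapse the element $t$ into the block of $i$. Concretely, say (relabelling if necessary) that $a_l = x_{l,1}$ lies on the collapsed side, so $\cng x_{l,1}=x_{r,1}\jj\text{(}\cdots\text{)}$ — more precisely $\cng o=a_l(\bga)$ fails in general, but $\cng a_l=i(\bga)$ holds while $\cng a_r=i(\bga)$ fails. I would first record the consequences of this inside the $\SfS 7$ copy: from $\cng a_l=i(\bga)$ and semimodularity one gets $\cng o=a_r(\bga)$ is false and $\cng z_{l,1}=t(\bga)$-type relations must hold in any extension, whereas $z_{r,1}$ stays separate from $t$. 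Then, for $i=1,\dots,n$, define $\ul i,\ol i$ exactly as in Lemma~\ref{L:extend2} using the relation $\cng x_{l,i}=x_{l,\ul i}(\bga)$ for the left ladder; the right ladder $z_{r,1}\succ\cdots\succ z_{r,m}$ is handled the same way with $\ul i,\ol i$ defined from the $x_{r,i}$.

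Next I would write down the candidate partition $\gp$ of $L[S]$: take all blocks $[u,v]_{L[S]}$ coming from congruence classes $[u,v]_L$ of $\bga$, then adjust the block containing the pair $o,i$-region so that $t$ joins the class of $i$ (since $\cng a_l=i(\bga)$ forces $i$ and $a_l$ together, and $a_l\prec t\prec i$-type covering relations inside $\SfS 7$ then force $t$ into that class by Lemma~\ref{L:technical}), and finally add the ladder blocks $[z_{l,\ul i},z_{l,\ol i}]$ for $i=1,\dots,n$ and $[z_{r,\ul i},z_{r,\ol i}]$ for $i=1,\dots,m$, with the understanding that on the left ladder the blocks may be fused with the big blocks coming from $\bga$ when the corresponding $x_{l,i}$-interval already absorbs them, while on the right ladder they stay as genuinely new small blocks since $a_r\not\equiv i$. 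The first thing to check is that this is a partition — i.e.\ that the new small blocks are intervals of $L[S]$ and are pairwise disjoint and disjoint from (or contained in) the old blocks — which is the same bookkeeping as in Lemma~\ref{L:extend2}, now done separately on the two sides.

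Then I would verify that $\gp$ is a congruence using Lemma~\ref{L:technical}, checking (C${}_{\jj}$) (and dually (C${}_{\mm}$)) by the same case analysis as in Lemma~\ref{L:extend2}: given $x\prec y,z$ in $L[S]$ with $y\ne z$ and $\cng x=y(\gp)$, either the relation already comes from $\bga$ and then Lemma~\ref{L:easy} (via $x\jj y=x\jj\ol y$) reduces it to a statement in $L$, or $x,y$ lie in one of the new ladder blocks and the cover $z$ is then forced by the explicit structure of the two new covering squares at each rung, or finally $x,y$ lie in the adjusted block containing $t$ and $i$, where one must check the covers going out of $t$ and out of $i$ inside the $\SfS 7$ — this last point is where the asymmetry bites, because a cover from $t$ leads down to $z_{l,1}$ and $z_{r,1}$, and one must confirm $\cng z_{r,1}=z_{r,1}\jj\text{(block of }t\text{)}(\gp)$ holds, which is exactly the statement that $z_{r,1}$ is in the same $\gp$-block as $i$; and that is \emph{not} what we want unless it follows, so the correct reading is that $t$ is collapsed to $i$ but the cover condition is examined with $z=z_{r,1}$, $y$ the top of $t$'s block, giving $z\jj y$ up in the old $\bga$-block, and one uses $\cng a_r\not\equiv i$ only to see that $z_{r,1}$'s \emph{own} ladder block is small. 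I expect this verification at $t$ and $a_r$ to be the main obstacle: one must be careful that collapsing $t$ into the class of $i$ does not illegitimately force $z_{r,1}$ (hence $a_r$) into that class, and the resolution is that the covers of $t$ other than into $z_{l,1}$ go into $z_{r,1}$, and for that pair the required conclusion $\cng z_{r,1}=z_{r,1}\jj t(\gp)$ holds because $z_{r,1}\jj t=i$ and $\cng z_{r,1}=i$ is precisely forbidden — so in fact one shows $t$ is \emph{not} collapsed to $i$ after all, but rather $t$ sits in the singleton-or-$z_{l,\cdot}$ arrangement; I would resolve this by checking the $\SfS 7$ relations directly and letting $\cng a_l=i(\bga)$ dictate only the left ladder and the identification of $t$ with $z_{l,1}$'s block, leaving the block of $i$ untouched except as dictated by $\bga$ on $L$. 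Uniqueness then follows as before: any extension must contain $\bga$, must collapse the left ladder at least as much as the minimal choice because of the covering squares $\set{z_{l,k+1},y_{l,k+1},z_{l,k},y_{l,k}}$, and must respect $\cng a_l=i$ and $a_r\not\equiv i$, which pins down $t$ and the right ladder; comparing with the dual lower bound forces equality, so $\oa$ is the unique congruence of $L[S]$ restricting to $\bga$ on $L$.
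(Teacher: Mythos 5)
Your proposal goes wrong at its very first structural claim: you assert that $o \equiv a_r \pmod{\bga}$ fails, but it is in fact forced, since $\bga$ is a congruence of $L$ and meeting $a_l \equiv i \pmod{\bga}$ with $a_r$ gives $o = a_l \mm a_r \equiv i \mm a_r = a_r \pmod{\bga}$. Propagating this down the right-hand ladder (meet with $x_{r,k+1}$ in each covering square of $L$) shows that every prime interval $[y_{r,k}, x_{r,k}]$ is already collapsed by $\bga$; hence, by convexity, each new element $z_{r,k}$ must be absorbed into the old $\bga$-class of $x_{r,k} \equiv y_{r,k}$, so your plan to keep the right ladder ``as genuinely new small blocks'' as in Lemma~\ref{L:extend2} cannot yield a congruence. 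The same error derails your treatment of $t$: since $z_{r,1}$ lies in the collapsed interval $[o, a_r]$, any extension must have $z_{r,1} \equiv a_r$, and then $t = z_{l,1} \jj z_{r,1} \equiv z_{l,1} \jj a_r = i$; equivalently, joining $o \equiv a_r \pmod{\bga}$ with $z_{l,1}$ already forces $z_{l,1} \equiv i$. So the arrangement you finally settle on --- $t$ identified only with $z_{l,1}$'s block, the block of $i$ ``left untouched'' --- violates the substitution property and is not a congruence. Your worry that collapsing $t$ with $i$ would drag $z_{r,1}$ (hence $a_r$) into that class is unfounded: condition (C${}_{\jj}$) at $x = z_{r,1}$ is only triggered when $z_{r,1}$ is congruent to one of its covers, and in the correct extension $z_{r,1}$ is congruent to $a_r$, not to $t$, so no such forcing occurs.

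What the paper actually does is in a sense the opposite of Lemma~\ref{L:extend2}: under the present hypothesis no blocks consisting solely of new elements occur at all. One first notes that $x_{l,k} \not\equiv y_{l,k} \pmod{\bga}$ for every $k$ (otherwise joining up the left ladder gives $o \equiv a_l$, hence $o \equiv i$ and $a_r \equiv i$, contradicting the hypothesis), and then defines $\oa$ by adjoining each new element to the $\bga$-class of its upper cover in $L$: each $z_{l,k}$ to $x_{l,k}/\bga$, each $z_{r,k}$ to $x_{r,k}/\bga$, and $t$ together with $z_{l,1}$ to $i/\bga$ (recall $x_{l,1} = a_l \equiv i$). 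All of these identifications are forced by the computations above, which is also what makes the uniqueness statement immediate. The verification of (C${}_{\jj}$) and (C${}_{\mm}$) is then a case analysis of the kind you envisage, but it must be run on this partition, not on the one your proposal constructs.
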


\begin{proof}

\begin{figure}[tbh]
  \centerline{\includegraphics{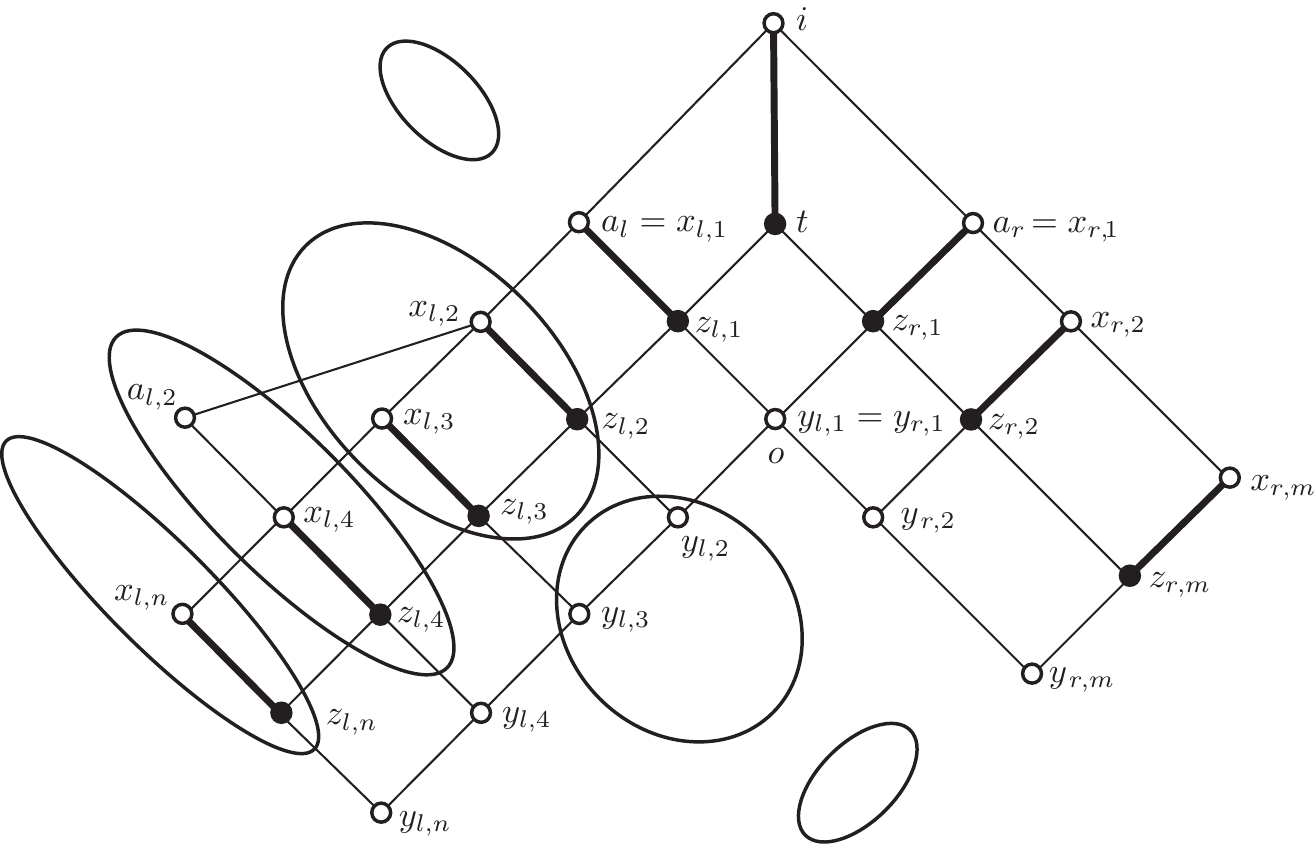}}
  \caption{Illustrating $\oa$ for Lemma \ref{L:extend3}}\label{F:alphabar}
\end{figure}

Define 
\begin{align}
   i/\oa &= i/\bga \uu \setm{z_{l,i}}
               {x_{l,i} \equiv x_{l,1}\,(\tup{mod}\,\bga)}
                &&\text{\q for $i = 1, \dots, n$};\label{E:i}\\
   x_{l,i}/\oa &=  x_{l,i}/\bga \uu \setm{z_{l,k}}
               {x_{l,i} \equiv x_{l,k}\,(\tup{mod}\,\bga)}
                      &&\text{\q for $i,k = 2, \dots, n$};\label{E:ii}\\
  x_{r,j}/\oa &= x_{r,j}/\bga \uu \setm{z_{r,k}}
                  {x_{r,j} \equiv x_{r,k}\,(\tup{mod}\,\bga)}
               &&\text{\q for $j,k = 2, \dots, m$}\label{E:iii}.
\end{align}
We define $\oa$ as follows: 

Let $x/\oa$ be as defined above for 
\[
   x \in i/\oa \uu \UUm{x_{l,i}/\oa}{i = 2, \dots, n} 
      \uu \UUm{x_{r,j}/\oa}{j = 2, \dots, m};
\]
let $x/\oa = x/\bga$ otherwise.

We have to prove that $\oa$ is a congruence. 

The sets $x/\oa$, $x \in L[S]$, 
are pairwise disjoint intervals, 
so they define an equivalence relation $\oa$ 
with intervals as equivalence classes.

We again use Lemma~\ref{L:technical}.
To verify (C${}_\jj$), let $x \prec y,z \in L[S]$, $y \neq z$, 
and $\cng x = y (\oa)$. We want to prove that $\cng z = z \jj y (\oa)$
There are four cases to consider.

Case 1: $x, y \in L$. Then $\cng x=y(\bga)$.
If $z \in L$, then $\cng x \jj z =y\jj z(\bga)$, since $\bga$ is a congruence of $L$, 
so $\cng x \jj z =y\jj z(\oa)$. So we can assume that $z \in F[S]$. 

Since $x \prec y$ in $L$, it follows that $x, y \in \id o$ 
or $x, y \in \fil {x_{l,n}} \uu \fil {x_{r,m}}$.

\q Case 1.1: $x, y \in \id o$ and $z \in F[S]$. 
Let $z = z_{l,i}$ ($z = z_{r,i}$ proceeds similarly). 
Then $z \jj y = z_{l,i-1}$.
We have to verify that $\cng z_{l,i} = z_{l,i-1} (\oa)$.

Note that $x \leq x_{l,i}$ and $x_{l,i} \jj y = x_{l,i-1}$.
Since $\cng x=y(\bga)$, 
it follows that $\cng x_{l,i}=x_{l,i-1}(\bga)$.
By \eqref{E:ii}, $\cng z_{l,i}=z_{l,i-1}(\oa)$, as claimed.

\q Case 1.2: $x, y \in \fil {x_{l,n}} \uu \fil {x_{r,m}}$ 
and $z \in F[S]$.
This cannot happen in view of $x \prec z$.

Case 2: $x \in L$, $y \in F[S]$. 

\q Case 2.1: $y = z_{l,i}$. 
This cannot happen because by \eqref{E:ii} there is no
$x \in L$, $x < z_{l,i}$ in $x_{l,i}/\oa$.

\q Case 2.2: $y = z_{r,i}$. Then $x = y_{r,i}$.

\qq Case 2.2.1: $i \neq 1$. 
Then $z = y_{r, i-1}$ and $\cng {z_{r,i-1}}={y\jj z = z_{r,i-1}}(\oa)$
by~\eqref{E:iii}.

\qq Case 2.2.2: $i = 1$, so $y = y_{r,1}$. 
Then $z = z_{l, 1}$ and we proceed as in Case~2.2.1.

\q Case 2.3: $y = t$.
This cannot happen because there is no
$x \in L$ with $x \prec t$.

Case 3:  $x \in F[S]$, $y \in L$. 
Then $x = z_{l, i}$ and $y = x_{l, i}$. Joining them with $z$, if we get two distinct elements, they are of the same form, of the pair $t$, $i$, all of which are congruence modulo $\oa$. 

Case 4: $x, y \in F[S]$. Let $x = z_{l,i}$ and $y = z_{l,i-1}$ 
(or symmetrically). 
Then $z = x_{l,i}$ and the conclusion follows from \eqref{E:iii}.

We have verified condition (C${}_{\jj}$) of Lemma~\ref{L:technical}.
The final step is to verify (C${}_{\mm}$).

So let $x$ cover $y \neq z$ in $L[S]$ 
and $\cng x = y(\oa)$. 
We want to prove that $\cng z = x \mm z(\oa)$.
We distinguish the same four cases as for (C${}_{\mm}$).

Case 1: $x, y \in L$. Then $\cng x=y(\bga)$.
If $z \in L$, then $\cng x \mm z =y\mm z(\bga)$, 
so $\cng x \mm z =y\mm z(\oa)$. So we can assume that $z \in F[S]$. 

Since $z \in F[S]$ is covered by exactly one element $\ol z$ of $L$,
so $x = \ol x$, and $y \mm z, z \in x/\oa$, 
and the claim follows.

Case 2: $x \in L$, $y \in F[S]$, say $y = z_{l,i}$. 
Then $x = x_{l,i}$, so $y = z_{l,i}$ and $z = x_{l,i+1}$.
By \eqref{E:ii}, $\cng y =y \mm z(\oa)$, as claimed.

Case 3:  $x \in F[S]$, $y \in L$. 
This cannot happen because there is no
$x \in F[S]$ and $y \in L$ with $x \succ y$ and $\cng x=y(\oa)$.

Case 4: $x, y \in F[S]$. Let $x = z_{l,i}$ and $y = z_{l,i+1}$ 
(or symmetrically). 
Then $z = y_{l,i}$ and the conclusion follows from the fact that
$\cng y_{l,i} = y_{l,i+1}(\bga)$.

The uniqueness statement is obvious.
\end{proof}

\end{document}